%
%
%
%
%
%
%
%
%
%
%
%
\documentclass[12pt]{amsart}
\usepackage{amsmath,amsthm,amsfonts,latexsym,amssymb,amscd,color}
\usepackage{chemarr}
\pagestyle{headings}
\setlength{\textwidth}{36true pc}
\setlength{\headheight}{8true pt} 
\setlength{\oddsidemargin}{0 truept}
\setlength{\evensidemargin}{0 truept}
\setlength{\textheight}{572true pt}
%
%
\newtheorem{thm}{Theorem}[section] 
\newtheorem{cor}[thm]{Corollary}
\newtheorem{lm}[thm]{Lemma}

\newtheorem{clm}[thm]{Claim}
\newtheorem*{clm*}{Claim}
\theoremstyle{definition}

\newtheorem{remark}[thm]{Remark}

\numberwithin{equation}{section}
%
\newcommand{\cproof}{\noindent{\it Proof of Claim.}\ } 
\newcommand{\cqed}{\hfill\rule{1.3mm}{3mm}}


\DeclareMathOperator{\Hom}{\mathsf{H}}

\DeclareMathOperator{\Sub}{\mathsf{S}}

\DeclareMathOperator{\Prod}{\mathsf{P}}

\newcommand{\wec}[1]{{\mathbf{#1}}}  

\newcommand{\m}[1]{{\mathbf{\uppercase{#1}}}}

\DeclareMathOperator{\Pol}{Pol}
\DeclareMathOperator{\Con}{Con}


\begin{document}

\title[Minimal abelian varieties of algebras, I]{Minimal abelian varieties of algebras, I}

\author{Keith A. Kearnes}
\address[Keith A. Kearnes]{Department of Mathematics\\
University of Colorado\\
Boulder, CO 80309-0395\\
USA}
\email{kearnes@colorado.edu}

\author{Emil W. Kiss}
\address[Emil W. Kiss]{
Lor\'{a}nd E{\"o}tv{\"o}s University\\
Department of Algebra and Number Theory\\
H--1117 Budapest, P\'{a}zm\'{a}ny P\'{e}ter s\'{e}t\'{a}ny 1/c.\\
Hungary}
\email{ewkiss@cs.elte.hu}

\author{\'Agnes Szendrei}
\address[\'Agnes Szendrei]{Department of Mathematics\\
University of Colorado\\
Boulder, CO 80309-0395\\
USA}
\email{szendrei@colorado.edu}

\thanks{This material is based upon work supported by
  the National Science Foundation grant no.\ DMS 1500254,
  the Hungarian National Foundation for Scientific Research (OTKA)
  grant no.\ K115518, and
  the National Research, Development and Innovation Fund of Hungary (NKFI)
  grant no.\ K128042.}

\subjclass[2010]{Primary: 03C05; Secondary: 08A05, 08B15}
\keywords{Abelian algebra, minimal variety,
  rectangular algebra, strongly abelian algebra, term condition}

\begin{abstract}
  We show that any abelian variety
  that is not affine has a nontrivial strongly abelian subvariety.
  In later papers in this sequence we apply this result
  to the study of minimal abelian varieties.
\end{abstract}

\maketitle

\section{Background}\label{back}
This paper concerns the classification of minimal
varieties of algebras.
Early work on this topic focused on determining
the minimal subvarieties of known varieties
(e.g., groups, rings, modules, lattices, etc). For a survey
of these results, please consult \cite{szendrei-survey}.
This 1992 survey also contains the state of knowledge
(at the time) of the problem of classifying all minimal
locally finite varieties.

But shortly after the publication of \cite{szendrei-survey},
two groups of researchers (Szendrei on the one hand
and Kearnes-Kiss-Valeriote on the other)
independently classified the 
minimal, locally finite, abelian varieties of algebras.
Multiple proofs of the classification theorem were discovered
and presented in the papers
\cite{kearnes-kiss-valeriote,kearnes-szendrei1,szendrei1,szendrei2}.
Those proofs start with the observation that each
minimal locally finite variety contains
a smallest nontrivial member, which must be
finite, simple, and have no proper nontrivial subalgebras.
Tame congruence theory assigns a number (a {\it type})
to any finite simple
algebra: if abelian, the type must be {\bf 1} (the $G$-set type),
or type {\bf 2} (the vector space type). The type {\bf 1}/type {\bf 2}
case division is the main case division in the classification
of minimal, locally finite, abelian varieties.
Within each of these two cases there are subcases
related to dimension and to the field associated to the vector
space in type {\bf 2}. The full classification is accomplished
by examining type~{\bf 1} and type~{\bf 2}
simple algebras until one can isolate out and fully describe
those which generate minimal varieties.

The problem of extending these results to 
varieties that are not locally finite
has been considered, but only under additional
very strong hypotheses.
For example, in \cite[Theorem~5.12]{kearnes-id-simple}
the classification of minimal abelian varieties
is obtained under the assumption that the variety is idempotent
and contains a nontrivial quasiaffine algebra.
In \cite[Corollary~2.10]{kearnes-min-id}
this is generalized to eliminate the hypothesis
that the variety
contains a nontrivial quasiaffine algebra.
The result here is a classification
of arbitrary minimal, abelian, idempotent varieties

\bigskip

This is the first in a sequence of three papers in which we
attempt to classify the minimal abelian varieties
without any additional assumptions at all.
We predict that a complete
classification proof might evolve along these lines:

\bigskip

\noindent
{\bf Goal 1.} Show that minimal abelian varieties exist in two
unrelated types, corresponding to the type {\bf 1}/type {\bf 2}
case division observed in the locally finite setting.
We propose that these types should be: strongly abelian
minimal varieties as the extension of the type {\bf 1} case,
and affine varieties
as the extension of the type {\bf 2} case.

\bigskip

\noindent
{\bf Goal 2.}
Classify the minimal affine varieties.

\bigskip

\noindent
{\bf Goal 3.}
Classify the minimal strongly abelian varieties.

\bigskip

In this paper we accomplish Goal 1.
Actually, we prove a stronger statement that does not
involve minimality, namely we prove that any abelian variety
that is not affine has a nontrivial strongly abelian subvariety.

In the second paper
in the sequence, \cite{kksz2}, we accomplish Goal 2 
in the following sense: we reduce the classification
of minimal affine varieties to the classification of
simple rings. Each minimal affine variety has an
associated simple ring, and each simple ring is
associated to some minimal affine varieties.
This is a many-to-one correspondence between
minimal affine varieties and simple rings. We completely
explain the relationship between the varieties and the rings.

In the third paper in the sequence, \cite{kksz3}, we make partial progress
on Goal 3. Namely, we classify those minimal strongly abelian
varieties that have a finite bound on the essential arities
of their terms. Here, when we say `we classify',
we mean that we reduce the classification of
these varieties
to the classification
of simple monoids with zero.
Also in the third paper of the sequence we show that
there are minimal strongly abelian varieties
that do not have a finite bound on the
essential arities
of their terms, thereby showing that there is more
work to do to complete the classification of minimal
strongly abelian varieties.
    
\bigskip

\section{Terminology and notation} \label{prelim}

An algebraic language $\mathcal L$ is determined by a function
$\alpha: F\to \omega$ where $F$ is a set of operation symbols
and $\alpha$ assigns arity. An algebra for this language
is a pair $\langle A; F\rangle$ where $A$ is a nonempty set
and for each symbol $f\in F$ there is a fixed interpretation
$f^{\m a}: A^{\alpha(f)}\to A$ of that symbol as an $\alpha(f)$-ary operation
on $A$.

Let $X = \{x_1, \ldots \}$ be a set of variables.
The set $\mathcal T$
of all terms in $X$ in a language $\mathcal L$ is defined
recursively by stipulating that (i) $X\subseteq {\mathcal T}$,
and (ii) if $f\in F$, $\alpha(f)=k$,
and $t_1,\ldots, t_k\in {\mathcal T}$, then
$f(t_1,\ldots,t_k)\in {\mathcal T}$.
The assignment $f\mapsto f^{\m a}$, which assigns
an operation table to a symbol, can be extended to terms
$t\mapsto t^{\m a}$. We call the interpretation $t^{\m a}$
of $t$ the term operation of $\m a$ associated to the term $t$.

An identity in $\mathcal L$ is a pair of terms,
written $s\approx t$. The identity $s\approx t$ is satisfied by $\m a$,
written $\m a\models s\approx t$, if
$s^{\m a}=t^{\m a}$. Given a set $\Sigma$
of identities, the class $\mathcal V$
of all $\mathcal L$-algebras
satisfying $\Sigma$ is called the variety axiomatized
by $\Sigma$.

We shall use Birkhoff's Theorem, which asserts
that the smallest variety containing a class
$\mathcal K$ of $\mathcal L$-algebras is the class
$\Hom\Sub\Prod(\mathcal K)$ of homomorphic
images of subalgebras of products of algebras in $\mathcal K$.

A subvariety of a variety
$\mathcal V$ is a subclass of $\mathcal V$
that is a variety.
A variety is trivial if it
consists of $1$-element algebras only.
A variety is minimal if it is not trivial, but
any proper subvariety is trivial.

The full constant expansion of $\m a$
is the algebra $\m a_A=\langle A; F\cup \{c_a\;|\;a\in A\}\rangle$
obtained from $\m a$
by adding a new $0$-ary (constant) symbol $c_a$
for each element $a\in A$.
By a polynomial operation
of $\m a$ we mean
a term operation of $\m a_A$. 

A $1,1$-matrix of $\m a$ is a $2\times 2$ matrix of elements
of $A$ of the form
\begin{equation} \label{matrixDEFN}
  \left[\begin{array}{cc}
t(\wec{a},\wec{u}) & t(\wec{a},\wec{v}) \\
t(\wec{b},\wec{u}) & t(\wec{b},\wec{v}) 
      \end{array}\right]=
  \left[\begin{array}{cc}
      p&q\\
      r&s
      \end{array}\right]\in A^{2\times 2}
  \end{equation}
  where $t(\wec{x},\wec{y})$ is a polynomial
  of $\m a$ and $\wec{a}, \wec{b}, \wec{u}, \wec{v}$
  are tuples of elements of $A$.
  The set of $1, 1$-matrices is invariant under
  the operations of
  swapping rows, swapping columns, and matrix transpose.

  $\m a$ has property ``$X$'', or ``is $X$'', if 
  the corresponding implications hold for all $1,1$-matrices
  in (\ref{matrixDEFN}):
\begin{itemize}  
\item ($X$ = abelian) provided
  $p=q$ implies $r=s$. (Equivalently, if
  $p=r$ implies $q=s$.)
\item ($X$ = rectangular) provided that,
  for some compatible partial order on $\m a$, $\geq$,
  it is the case that
  $u\geq q$ and $u\geq r$ together imply $u\geq s$. 
\item ($X$ = strongly rectangular) provided $q=r$ implies $r=s$. 
\item ($X$ = strongly abelian) (same as abelian + strongly rectangular).
\item ($X$ = affine) (same as abelian + has a Maltsev operation).
\end{itemize}
A variety ``is $X$'' if all of its algebras are.

All of these concepts will be used in this paper.
What we have just written is not enough to understand
what follows, so please see Chapters 2 and 5
of \cite{kearnes-kiss} for more detail
about these concepts when necessary.

\section{Abelian and affine algebras} \label{abaff}

Our main goal in this section is to prove
that if $\mathcal V$ is an abelian variety that
is not an affine variety, then $\mathcal V$ contains a 
nontrivial strongly abelian subvariety.
Applying this to the situation where $\mathcal V$
is a minimal variety, we obtain that
any minimal abelian variety is affine or strongly abelian.

The path we follow in this section is to prove
the following sequentially stronger
Facts about an arbitrary abelian variety $\mathcal V$ that is not affine:
\begin{enumerate}
\item[(I)] $\mathcal V$ contains an algebra with a nontrivial
  strongly abelian congruence.
\item[(II)] $\mathcal V$ contains a nontrivial
  strongly abelian algebra.
\item[(III)] $\mathcal V$ contains a nontrivial
  strongly abelian subvariety.
\end{enumerate}

The following theorem establishes Fact (I).

\begin{thm} \label{I}
  Let $\mathcal V$ be an abelian variety. If $\mathcal V$ is not affine,
then there is an algebra $\m a\in\mathcal V$ that has a nontrivial
  strongly abelian congruence.
\end{thm}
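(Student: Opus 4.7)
The plan is to exploit the classical equivalence, for abelian varieties, between being affine, having a Mal'tsev term, and being congruence permutable. Since $\mathcal V$ is abelian but not affine, there is a concrete term-theoretic obstruction: in the free algebra $\m f = F_{\mathcal V}(x,y,z)$ the principal congruences $\alpha = \mathrm{Cg}^{\m f}(x,y)$ and $\beta = \mathrm{Cg}^{\m f}(y,z)$ fail to permute, witnessed by $(x,z) \in (\alpha \vee \beta) \setminus (\alpha \circ \beta)$. Equivalently, no ternary term $m$ of $\mathcal V$ simultaneously satisfies the identities $m(x,z,z)\approx x$ and $m(x,x,z)\approx z$.

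Using this obstruction I would build an algebra $\m a \in \mathcal V$ equipped with a nontrivial candidate congruence $\eta$. The natural place to look is a suitable subdirect product or quotient built from $\m f$ -- for instance a subalgebra of $\m f/\alpha \times \m f/\beta$, or a quotient $\m f/\gamma$ where $\gamma$ is chosen to freeze the non-permuting interaction of $\alpha$ and $\beta$ while still keeping the image of $\alpha$ nontrivial. The abelianness of $\mathcal V$ ensures good control of how these congruences centralize one another, and the non-permutability keeps the construction from collapsing in the way it would under the presence of a Mal'tsev term; the resulting $\eta$ should be the image of $\alpha$ (or of $\beta$) in $\m a$.

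The main obstacle is verifying that $\eta$ is strongly abelian, not merely abelian (the latter being automatic from $\mathcal V$). The core of the work is the strongly rectangular condition: for every polynomial $t$ and $\eta$-related tuples $\wec a \equiv \wec b$, $\wec u \equiv \wec v$, the equality $t(\wec a, \wec v) = t(\wec b, \wec u)$ should force $t(\wec b, \wec u) = t(\wec b, \wec v)$. My strategy is to argue by contradiction: a genuine failure of this condition, combined with the abelian hypothesis and the specific structure of the chosen quotient, should be manipulable into a ternary term on $\m f$ realizing a chain $x \mathrel{\beta} w \mathrel{\alpha} z$, contradicting non-affineness. Turning a failure of strong rectangularity into such a Mal'tsev witness is the technical heart of the argument, and I expect it to require auxiliary machinery such as twin terms, a careful analysis of the commutator/centralizer relations in $\m f$, or a more intricate subdirect construction than the rough sketch above indicates.
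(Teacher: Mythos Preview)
Your high-level instinct is reasonable: for an abelian variety, non-affineness is equivalent to failure of congruence permutability in $F_{\mathcal V}(x,y,z)$, and this is a natural place to look for an obstruction. But the proposal has a genuine gap at precisely the point you flag as the ``technical heart'': you have not given any mechanism for converting a failure of strong rectangularity of your candidate $\eta$ into a Mal'tsev witness $x\mathrel{\beta}w\mathrel{\alpha}z$. A failure of strong rectangularity is a local datum --- one polynomial $t$, one $1,1$-matrix over one quotient --- whereas a Mal'tsev term is a single ternary \emph{term} satisfying two identities throughout $\mathcal V$. Nothing in your sketch explains how to extract the latter from the former, and in fact this step is not a technicality: it is the whole content of the theorem. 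Your honest phrases (``I expect it to require auxiliary machinery'', ``a more intricate subdirect construction than the rough sketch above indicates'') are accurate self-assessments that the argument is not actually there.

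The paper's proof shows why this step is hard. It also runs by contrapositive, but it does not attempt a direct construction in any single algebra. Instead it chains together several structural theorems: the absence of nontrivial strongly abelian congruences throughout $\mathcal V$ already forces a nontrivial idempotent Maltsev condition (Theorem~3.13 of \cite{kearnes-kiss}); that in turn yields a join term (Theorem~3.21 of \cite{kearnes-kiss}), which in an abelian variety kills all nontrivial rectangular tolerances; omitting such tolerances is equivalent to satisfying an idempotent Maltsev condition that fails in semilattices (Theorem~5.25 of \cite{kearnes-kiss}); and only then does one invoke \cite[Theorem~4.10]{kearnes-szendrei2} to conclude that abelian implies affine. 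Note in particular that ``no strongly abelian congruences'' does not yield a Mal'tsev term in one step --- it first yields only \emph{some} nontrivial idempotent Maltsev condition, and the upgrade to a Mal'tsev term requires the intermediate passage through rectangular tolerances and the semilattice obstruction. Your proposed shortcut, from a single bad $1,1$-matrix straight to $(x,z)\in\alpha\circ\beta$, bypasses all of this, and there is no reason to believe it can be made to work without reproving substantial parts of that machinery.
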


\begin{proof}
  We prove the contrapositive of the second
  sentence in the theorem statement. Namely, under
  the hypothesis that $\mathcal V$
  is abelian, we show that if $\mathcal V$ contains
  no algebra $\m a$ with a nontrivial
  strongly abelian congruence, then $\mathcal V$ is affine.

If $\mathcal V$ has no algebra $\m a$ with a nontrivial
strongly abelian congruence, then Theorem~3.13 of \cite{kearnes-kiss}
proves that $\mathcal V$ satisfies a nontrivial idempotent
Maltsev condition.
By Theorem~3.21 of
\cite{kearnes-kiss}, any variety that satisfies
a nontrivial idempotent
Maltsev condition
has a \emph{join term}, which is a term
whose associated term operation
acts as a semilattice join operation
on the blocks of any rectangular tolerance relation
of any algebra in the variety.
But no subset of more than one element in an abelian algebra
can be closed under a semilattice term operation, because this would
realize a nontrivial semilattice as a subalgebra
of a reduct of an abelian algebra. Subalgebras of reducts
of abelian algebras are abelian, and no semilattice
of more than one element is abelian. This shows that
blocks of rectangular tolerances in $\mathcal V$ are singleton sets,
which is another way of saying that $\mathcal V$ contains
no algebra with a nontrivial rectangular tolerance.

By Theorem~5.25 of \cite{kearnes-kiss}, the fact
that $\mathcal V$ omits nontrivial rectangular
tolerances is equivalent to the fact that
$\mathcal V$ satisfies
an idempotent Maltsev condition that fails
in the variety of semilattices.
Finally, Theorem~4.10 of \cite{kearnes-szendrei2},
proves that if $\mathcal V$ is
any variety satisfying an idempotent Maltsev
condition which fails in the variety of semilattices,
then abelian algebras in $\mathcal V$ are affine.
Altogether, this shows that if $\mathcal V$
is abelian and no algebra in $\mathcal V$ has a nontrivial
  strongly abelian congruence, then $\mathcal V$ is affine.
  \end{proof}

This concludes the proof of Fact (I).
Our next goal is to prove Fact (II): if
$\mathcal V$ is abelian but not affine,
then $\mathcal V$ contains a nontrivial
strongly abelian algebra.

The following notation will be needed for
Lemma~\ref{nonaffine}, which is a result proved in 
\cite{kearnes-kiss-szendrei} (Lemma~2.1 of that paper).
Assume that $\m a$ is abelian
and $\theta\in\Con(\m a)$ is strongly abelian.
Let $\m a(\theta)$ be the subalgebra of $\m a\times \m a$
supported by the graph of $\theta$.
Let $\Delta$ be the congruence on $\m a(\theta)$ 
generated by $D\times D$ where $D = \{(a,a)\;|\;a\in A\}$
is the diagonal. 
$D$ is a $\Delta$-class, because $\m a$ is abelian.
Let $\m s = \m s_{\m a,\theta} := \m a(\theta)/\Delta$.
Let $0 = D/\Delta\in S$.

\begin{lm}\label{nonaffine}
  Let $\mathcal V$ be an abelian variety, and suppose that
  $\theta$ is a nontrivial strongly abelian congruence on some
  $\m a\in\mathcal V$. Let $\m s = \m s_{\m a,\theta}$
  and let $0 = D/\Delta\in S$. The following are true:
  \begin{enumerate}
    \item $\m s$ has more than one element.
  \item $\{0\}$ is a 1-element subuniverse of $\m s$.
  \item $\m s$ has ``Property P'': for every $n$-ary polynomial
    $p(\wec{x})$ of $\m s$ and every tuple $\wec{s}\in S^n$
    \[
p(\wec{s})=0\quad\textrm{implies}\quad p(\wec{0})=0,
\]

  \bigskip

  \noindent
where $\wec{0} = (0,0,\ldots,0)$.
  \item Whenever $t(x_1,\ldots,x_n)$
    is a $\mathcal V$-term and
    \[
    \mathcal V\models t(\wec{x})\approx 
    t(\wec{y})
    \]

  \bigskip

  \noindent
    where $\wec{x}$ and $\wec{y}$ are tuples of not necessarily
    distinct variables which differ in the $i$th position,
    then the term operation $t^{\m s}(x_1,\ldots,x_n)$ is
    independent of its $i$th variable.
  \item $\m s$ has a congruence $\sigma$ such that the
    algebra $\m s/\sigma$ satisfies (1)--(4) of this lemma,
    and $\m s/\sigma$ also
    has a compatible partial order $\leq$ such that
$0\leq s$ for every $s\in (S/\sigma)$.
    \end{enumerate} \qed
  \end{lm}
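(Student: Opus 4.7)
My plan is to verify the five claims in sequence. Parts (1) and (2) are essentially immediate from the setup: the nontriviality of $\theta$ supplies an off-diagonal pair $(a,b)\in\m a(\theta)$, and since $D$ is already a single $\Delta$-class (by abelian-ness of $\m a$), we have $[(a,b)]_\Delta\ne 0$, giving $|S|>1$; moreover $D$ is the diagonal subalgebra of $\m a(\theta)$, so its image $\{0\}$ is a one-element subuniverse of $\m s$.

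For (3), I would lift to $\m a(\theta)$. A polynomial $p$ of $\m s$ arises from a polynomial $\tilde p$ of $\m a(\theta)$ whose two $\m a$-components have the form $t^{\m a}(\wec{a},\wec{c})$ and $t^{\m a}(\wec{b},\wec{c}')$, with $\wec{a}\mathrel{\theta}\wec{b}$ and $\wec{c}\mathrel{\theta}\wec{c}'$ (componentwise). The hypothesis $p(\wec{s})=0$ asserts that this pair lies in $D$, i.e., $t^{\m a}(\wec{a},\wec{c})=t^{\m a}(\wec{b},\wec{c}')$. Reading this as the $p=s$ corner-equality of the $1,1$-matrix of $t$ for $\theta$, the closure of $1,1$-matrices under row swaps together with the strongly rectangular property of $\theta$ yield $t^{\m a}(\wec{a},\wec{c})=t^{\m a}(\wec{a},\wec{c}')$. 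Evaluating $\tilde p$ at the diagonal representatives $(a_k,a_k)$ of $\wec{0}$ therefore produces a diagonal element, so $p(\wec{0})=0$.

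For (4), since $\m s\in\Hom\Sub\Prod(\mathcal V)\subseteq\mathcal V$, the identity $t(\wec{x})\approx t(\wec{y})$ is inherited by $\m s$. I would substitute into the identity the two elements to be compared at the position-$i$ variables of $\wec{x}$ and $\wec{y}$, and a common value (e.g.\ $0$) at the remaining variables, producing a single equation in $\m s$ between two tuples that differ only at position $i$. A combination of Property P, abelian-ness of $\m s$, and the subuniverse property of $\{0\}$ then extends this single equation to independence at position $i$ for arbitrary tuples, via the standard abelian matrix argument.

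Part (5) is the main obstacle. My plan is to introduce a reflexive, transitive, operation-compatible preorder $\leq$ on $\m s$ with $0$ as a minimum, defined by a rectangular-domination recipe anchored at $\{0\}$, and then take $\sigma$ to be its symmetric closure. Then $\sigma\in\Con(\m s)$ and $\m s/\sigma$ carries an honest compatible partial order. The delicate steps are (i) verifying compatibility of $\leq$ with all operations using the strongly abelian structure of $\theta$, and (ii) checking that (1)--(4) survive the quotient. The latter is essentially automatic for (2)--(4), which are universally quantified conditions preserved under congruence quotients, but nontriviality~(1) of $\m s/\sigma$ requires tracing the nontriviality of $\theta$ through the construction to exhibit two distinct $\leq$-classes in $\m s$.
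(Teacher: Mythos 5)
The paper does not actually prove Lemma~\ref{nonaffine}; it states it with a terminal \texttt{\textbackslash qed} and cites it as Lemma~2.1 of \cite{kearnes-kiss-szendrei}, so there is no in-paper proof to compare against. Taking your sketch on its own terms: parts (1) and (2) are immediate and correct as you say. For (3) the row-swap idea is exactly right: the hypothesis $p(\wec s)=0$ lifts to a diagonal equality $r(\wec a,\wec\sigma)=r(\wec b,\wec\tau)$ in $\m a$; swapping the rows of the $\theta,\theta$-matrix of $r$ turns this into an anti-diagonal equality, and strong rectangularity of $\theta$ then forces $r(\wec a,\wec\tau)=r(\wec a,\wec\sigma)$, which is precisely $p(\wec 0)=0$. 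For (4) the plan works but is over-equipped: after substituting distinct elements $a,b$ for the variables at position~$i$ of $\wec x$ and $\wec y$ and a common value at all other variables, the inherited identity yields one equation between tuples that agree off position~$i$, and the ordinary term condition (abelianness alone) transports it to arbitrary tuples. Property~P and the subuniverse property of $\{0\}$ are not required here.

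Part (5) has two issues. First, a genuine error: the symmetric \emph{closure} $\leq\cup\geq$ of a compatible preorder is generally not transitive and not a congruence; you want the symmetrization $\sigma=\leq\cap\geq$. Second, the preorder needs to be named, not gestured at: the natural choice is $a\leq b$ iff $f(b)=0\Rightarrow f(a)=0$ for every unary polynomial $f$ of $\m s$. With that definition, reflexivity and transitivity are immediate, and compatibility with operations follows one coordinate at a time from closure of polynomials under composition --- your claim that compatibility requires the strongly abelian structure of $\theta$ is misplaced. Where that structure enters is via Property~P, which is exactly the statement that $0$ is a $\leq$-minimum. Finally, taking $f=\mathrm{id}$ shows $s\leq 0$ forces $s=0$, so $0/\sigma=\{0\}$; this single observation is what makes (1) survive (the quotient is nontrivial because some $s\neq 0$ has $s\not\leq 0$) and what makes Property~P descend to $\m s/\sigma$. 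Items (2) and (4) pass to the quotient automatically as you observe.
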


This lemma puts us in position to establish Fact (II):

\begin{thm} \label{II}
  If $\mathcal V$ is an abelian variety that is not affine, then
  $\mathcal V$ contains a nontrivial strongly abelian algebra.
  \end{thm}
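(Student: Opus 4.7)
The plan is to exhibit the algebra $\m b := \m s_{\m a,\theta}/\sigma$ (or a minor variant) as the desired nontrivial strongly abelian algebra in $\mathcal V$. First, apply Theorem~\ref{I} to obtain an $\m a \in \mathcal V$ carrying a nontrivial strongly abelian congruence $\theta$. Lemma~\ref{nonaffine} then produces the algebra $\m s := \m s_{\m a,\theta} \in \mathcal V$ and its further quotient $\m b := \m s/\sigma$, which satisfies clauses~(1)--(4) of that lemma and, by clause~(5), carries a compatible partial order $\leq$ whose absolute minimum is $0$. In particular, $\m b \in \mathcal V$ is nontrivial and, as a member of the abelian variety $\mathcal V$, automatically abelian.

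It remains to verify that $\m b$ is strongly rectangular: for every polynomial $t(\wec{x},\wec{y})$ of $\m b$ and every choice of parameter tuples $\wec{a},\wec{b},\wec{u},\wec{v}$, the equality $t(\wec{a},\wec{v}) = t(\wec{b},\wec{u})$ should force $t(\wec{b},\wec{u}) = t(\wec{b},\wec{v})$. The ingredients on hand are: (i)~Property~P from clause~(3), which says that a polynomial vanishing at some tuple must also vanish at the zero tuple; (ii)~the monotonicity of all polynomial operations with respect to $\leq$, combined with $\wec{0}$ being the global minimum, which forces $0 \leq p(\wec{c})$ for every polynomial $p$ and every tuple $\wec{c}$; and (iii)~the abelian term condition valid throughout $\mathcal V$, together with the identity-level independence supplied by clause~(4). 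The intended argument is to convert the hypothesis $t(\wec{a},\wec{v}) = t(\wec{b},\wec{u})$ into an equation of the form $p(\wec{s}) = 0$ for some auxiliary polynomial $p$ built from $t$, invoke Property~P to obtain $p(\wec{0}) = 0$, and then use abelianness and the monotonicity of $\leq$ to transport this equality back to the desired $t(\wec{b},\wec{u}) = t(\wec{b},\wec{v})$.

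The main obstacle I expect is exactly this reduction. Property~P is a one-sided statement about vanishing at $0$, whereas strong rectangularity is a two-sided statement about arbitrary equalities, and without a native subtraction in $\m b$ there is no canonical way to translate an equation $x = y$ into one of the form $(x\ominus y) = 0$. Overcoming this will require a careful choice of auxiliary polynomial, almost certainly constructed by combining $t$ with the variable-independence data from clause~(4) so as to effectively pinch the putative difference between $r$ and $s$ down to the minimum element. If the algebra $\m b$ obtained directly from Lemma~\ref{nonaffine} does not itself turn out to be strongly abelian, a natural fallback is to iterate the lemma on $\m b$ (or on a suitable subalgebra of some power of $\m b$), exploiting the fact that each iteration sharpens the combinatorial structure, until the strongly rectangular property is forced to hold.
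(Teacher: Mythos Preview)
Your setup is right: invoke Theorem~\ref{I}, then Lemma~\ref{nonaffine}, and work with the ordered algebra $\m b=\m s/\sigma$ having least element $0$. You have also correctly identified the obstacle: Property~P only lets you push \emph{zero values} down to the zero tuple, and there is no subtraction to convert $r=s$ into a vanishing statement. But your proposed workarounds do not close this gap. Clause~(4) of Lemma~\ref{nonaffine} concerns \emph{identities} of $\mathcal V$ and the resulting variable-independence of \emph{term} operations; it gives you no handle on an arbitrary polynomial matrix witnessing a failure of strong rectangularity, and in fact the paper's argument never uses~(4). The fallback of iterating Lemma~\ref{nonaffine} is also not what is needed, and there is no evident reason such an iteration would terminate or improve matters.

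The missing idea is this: do not try to prove $\m b$ itself is strongly abelian. Instead, pass to one more quotient so that the partial order becomes \emph{definable} via vanishing. Define $a\sqsupseteq b$ to mean that $f(a)=0\Rightarrow f(b)=0$ for every unary polynomial $f$; this is a compatible quasiorder coarsening $\ge$, and $\{0\}$ is still a singleton class of the associated congruence $\theta={\sqsupseteq}\cap{\sqsupseteq}^{\cup}$. Replacing $\m b$ by $\m b/\theta$, the order now satisfies the bi-implication
\[
a\ge b \iff \bigl(f(a)=0\Rightarrow f(b)=0\ \text{for all }f\in\Pol_1\bigr),
\]
so \emph{any} failure $a\not\ge b$ produces a separating polynomial $f$ with $f(a)=0$ and $f(b)\ne 0$. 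This is exactly the missing substitute for subtraction. With it, one first proves \emph{rectangularity with respect to $\ge$} (if $u\ge q,r$ but $u\not\ge s$, apply the separating $f$, use Property~P on the resulting matrix to zero out the $\wec a$- and $\wec b$-parameters, then use abelianness to get a contradiction), and only then deduces \emph{strong} rectangularity from rectangularity plus abelianness (if $q=r$ then rectangularity gives $r\ge s$; if $r\ne s$ a separating $f$ yields a $1,1$-matrix of shape $\bigl[\begin{smallmatrix}q'&q'\\q'&0\end{smallmatrix}\bigr]$ with $q'\ne 0$, violating the term condition). Both the extra quotient and the two-step rectangular $\to$ strongly rectangular argument are essential, and neither appears in your plan.
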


\begin{proof}
By Theorem~\ref{I}, the assumption that $\mathcal V$
is abelian but nonaffine guarantees that there is some
$\m a\in{\mathcal V}$ that has some
nontrivial strongly abelian congruence.
By Lemma~\ref{nonaffine}, these data can be used
to construct a nontrivial algebra
$\m t:=\m s/\sigma\in{\mathcal V}$
that has a compatible partial order $\leq$
and a singleton subuniverse $\{0\}$ such that $0$ is the least element
under the partial order. We proceed from this point.

Observe that if $a, b\in T$ satisfy $a\geq b$,
then from $a\geq b\geq 0$ we derive that
$f(a)\geq f(b)\geq f(0)\;(\geq 0)$
for any unary polynomial $f\in\Pol_1(\m t)$.
In particular, if
$f(a) = 0$ we must also have $f(b)=0$.
Let us define a coarser quasiorder $\sqsupseteq$
on $\m t$ by this rule: for $a, b\in T$, let
$a\sqsupseteq b$ if 

\begin{equation} \label{implication}
f(a)=0\Rightarrow f(b)=0
\end{equation}
  
\bigskip

\noindent
holds for all $f\in\Pol_1(\m t)$.
The relation $\sqsupseteq$ is reflexive, transitive,
and compatible with unary polynomials,
so it is compatible with all polynomials.
Therefore $\sqsupseteq$ is a compatible
quasiorder on $\m t$. From the first two sentences
of this paragraph we see that
$\sqsupseteq$ extends $\geq$ (i.e., $\sqsupseteq$
is a coarsening of $\geq$).
This is enough to imply that $0$
is a least element with respect to
$\sqsupseteq$.

Let $\theta = \sqsupseteq\cap \sqsupseteq^{\cup}$.
  By considering what happens in (\ref{implication}) when $0\sqsupseteq b$
and $f(x) = x$ one sees that $0\sqsupseteq b$ implies $b=0$.
Hence $0/\theta = \{0\}$, from which it follows that
$\theta$ is a proper congruence of $\m t$.

We let $\m t'=\m t/\theta$,
$0'=0/\theta$, and $\geq' = {\sqsupseteq}/\theta$.
Now $\m t'$ is nontrivial,
has a compatible partial order $\geq'$ with least
element $0'$, and with respect to this partial order we have
$a\geq' b$ if and only if
\[f(a)=0'\Rightarrow f(b)=0'\]
  
  \bigskip

  \noindent
for all unary polynomials $f\in\Pol_1(\m t')$.
  
Our new algebra $\m t'$
is a quotient of the original algebra,
has all properties attributed to $\m t$
in the first paragraph of this proof, but
now we have strengthened the implication
``$a\geq' b$ in $\m t'$ implies $f(a)=0'\Rightarrow f(b)=0'$
for all unary polynomials $f\in\Pol_1(\m t')$''
to a bi-implication.
We now replace $\m t$ with $\m t'$,
drop all primes, and assume that
\begin{equation} \label{new_implication}
\textrm{$a\geq b$ in $\m t$ if and only if $f(a)=0\Rightarrow f(b)=0$.}
\end{equation}
    
It should be pointed out that the
reflexivity, transitivity, and compatibility of $\geq$
implies that $\m t$ satisfies Property P
of Lemma~\ref{nonaffine}. For suppose that $\wec{s}=(s_1,\ldots,s_n)$
and $p(\wec{s})=0$ for some polynomial $p$ of $\m t$.
Since $s_i\geq 0$ for all $i$ we derive from (\ref{new_implication})
that
\[
\begin{array}{rl}
p(\wec{s})= p(s_1,s_2,s_3,\ldots,s_n)=0 & \Rightarrow p(0,s_2,s_3,\ldots,s_n)=0 \\
& \Rightarrow p(0,0,s_3,\ldots,s_n)=0 \\
& \vdots\\
& \Rightarrow p(0,0,0,\ldots,0)=0, \\
\end{array}
\]
which is the assertion of Property P.  

  \begin{clm} \label{rectangulation}
    The total binary relation $1\in\Con(\m t)$ rectangulates itself
    with respect to $\geq$.
  \end{clm}

  \cproof
  Assume that
\begin{equation} \label{rect_matrix}
  \left[\begin{array}{cc}
t(\wec{a},\wec{u}) & t(\wec{a},\wec{v}) \\
t(\wec{b},\wec{u}) & t(\wec{b},\wec{v}) 
      \end{array}\right]=
  \left[\begin{array}{cc}
      p&q\\
      r&s
      \end{array}\right]
\end{equation}

  \bigskip

  \noindent
  is a $1,1$-matrix and that $u\geq q, r$. Our goal is to prove
  that $u\geq s$. If $u\not\geq s$, then
  according to (\ref{new_implication})
  there
  is a unary polynomial $f$ such that $f(u)=0$ and $f(s)\neq 0$.
  Since $u\geq q, r$ and since $0$ is the least element under $\geq$,
  we get that $0=f(u)\geq f(q), f(r) \geq f(0)$,
  so in fact $0=f(u) = f(q) = f(r) = f(0)$.
  Prefixing the polynomial $t$ in the
  left matrix of (\ref{rect_matrix})
  with the polynomial
  $f$, we obtain a $1, 1$-matrix of the form
\begin{equation} \label{11matrix}
  \left[\begin{array}{cc}
ft(\wec{a},\wec{u}) & ft(\wec{a},\wec{v}) \\
ft(\wec{b},\wec{u}) & ft(\wec{b},\wec{v}) 
      \end{array}\right]=
  \left[\begin{array}{cc}
      f(p)&f(q)\\
      f(r)&f(s)
      \end{array}\right]  = 
  \left[\begin{array}{cc}
      p'&0\\
      0&s'
      \end{array}\right]
\end{equation}

  \bigskip

  \noindent
  with $s'\neq 0$. That is,

\begin{equation} \label{crossdiag}
ft(\wec{a},\wec{v}) = 0 =   ft(\wec{b},\wec{u}),
\end{equation}

  \bigskip

  \noindent
  while $ft(\wec{b},\wec{v}) \neq 0$.
  Employing Property P in the forms
  $ft(\underline{\wec{a}},\wec{v}) = 0 \Rightarrow ft(\underline{\wec{0}},\wec{v}) = 0$
  and
  $ft(\underline{\wec{b}},\wec{u}) = 0 \Rightarrow ft(\underline{\wec{0}},\wec{u}) = 0$,
  we get from (\ref{crossdiag}) that

\begin{equation} \label{crossdiag2}  
ft(\underline{\wec{0}},\wec{v}) = 0 =   ft(\underline{\wec{0}},\wec{u}).
\end{equation}

\bigskip

  \noindent
Since $\m t$ is abelian, we derive from (\ref{crossdiag2}) that
\begin{equation} \label{contraequation}
ft(\underline{\wec{b}},\wec{v}) =  ft(\underline{\wec{b}},\wec{u}).
\end{equation}  
The left side of (\ref{contraequation})
equals $s'$ while the right side equals $0$,
yielding $s'=0$, contrary to the 
fact stated
after the line containing (\ref{11matrix}). The claim is proved.
  \cqed

  \bigskip

    \begin{clm}
    The total binary relation $1\in\Con(\m t)$
    strongly rectangulates itself.
  \end{clm}

    \cproof
Assume that
\begin{equation} \label{st_rect_matrix}
  \left[\begin{array}{cc}
t(\wec{a},\wec{u}) & t(\wec{a},\wec{v}) \\
t(\wec{b},\wec{u}) & t(\wec{b},\wec{v}) 
      \end{array}\right]=
  \left[\begin{array}{cc}
      p&q\\
      r&s
      \end{array}\right]
\end{equation}

\bigskip

\noindent  
is a $1,1$-matrix and that $q = r$. Our goal is to prove
that $r = s$. Note that by taking $u = q = r$ we get
from rectangulation with respect to $\geq$
(Claim~\ref{rectangulation}) that, since
$u\geq q, r$, we must have $u\geq s$, so in particular
we have $r\geq s$. If we do not have $r=s$ as desired,
then we must have $s\not\geq r$. In this case there
is a unary polynomial $f$ such that $f(s) = 0$
and $f(r)\neq 0$.
By prefixing the polynomial $t$
in the left matrix in (\ref{st_rect_matrix})
by $f$, we obtain a matrix of the form
\[
\left[\begin{array}{cc}
ft(\wec{a},\wec{u}) & ft(\wec{a},\wec{v}) \\
ft(\wec{b},\wec{u}) & ft(\wec{b},\wec{v}) 
      \end{array}\right]=
  \left[\begin{array}{cc}
      f(p)&f(q)\\
      f(q)&f(s)
      \end{array}\right]=
  \left[\begin{array}{cc}
      p'&q'\\
      q'&0
      \end{array}\right]
  \]
\bigskip

\noindent
with $q'=f(q)=f(r)\neq 0$. We also have (by rectangulation with
respect to $\geq$) that
any $u$ that majorizes the cross diagonal in 
\[
\left[\begin{array}{cc}
  p'&q'\\
  q'&0
\end{array}\right],
\]

\bigskip

\noindent
like $u=q'$, also majorizes the main diagonal, i.e. $q'\geq p'$.
Similarly, any $u$ that majorizes the main diagonal, 
like $u=p'$, also majorizes the cross diagonal, i.e. $p'\geq q'$.
In particular, $p'=q'$ and we have 
\[
\left[\begin{array}{cc}
ft(\wec{a},\wec{u}) & ft(\wec{a},\wec{v}) \\
ft(\wec{b},\wec{u}) & ft(\wec{b},\wec{v}) 
\end{array}\right]=
\left[\begin{array}{cc}
q'&q'\\
q'&0
\end{array}\right].
\]

\bigskip

\noindent
This is a failure of the term condition (which defines abelianness),
thereby proving the claim.
\cqed

\bigskip
  
We complete the proof of Theorem~\ref{II} by noting that
an algebra is strongly
abelian if and only if it is abelian and strongly rectangular.
Since we have shown that $\m t$ is strongly rectangular,
and we selected $\m t$ from an abelian variety, we conclude
that $\m t$ is strongly abelian.
\end{proof}

Next on our agenda is to prove Fact (III),
which asserts that, if $\mathcal V$ is abelian but not affine,
then $\mathcal V$ has a nontrivial subvariety that
is strongly abelian. We shall require the following lemma,
which is an extension of 
\cite[Theorem~7.1]{mckenzie-valeriote}.

Recall 
the class operators $\Hom$, $\Sub$, and $\Prod$
we introduced briefly in Section~\ref{prelim}.
Namely, for a class $\mathcal K$ of similar algebras 
$\Hom(\mathcal K)$ denotes the class of algebras that are
homomorphic images of members of $\mathcal K$, 
$\Sub(\mathcal K)$ denotes the class of algebras isomorphic
to subalgebras of members 
of $\mathcal K$,  and
$\Prod(\mathcal K)$ denotes the class of algebras isomorphic
to products of members 
of $\mathcal K$. Each of the classes
$\Hom(\mathcal K)$, $\Sub(\mathcal K)$, and
$\Prod(\mathcal K)$ has been defined so that it is closed
under isomorphism.

\begin{lm} \label{fake-mck-val}
  Assume that every finitely generated
  subalgebra of $\m a$ is strongly solvable.
  If\/ $\Hom\Sub(\m a^2)$ consists of abelian algebras,
  then $\Hom\Sub(\m a)$ consists of strongly
  abelian algebras.
\end{lm}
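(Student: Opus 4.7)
The plan is to split strong abelianness into its two components---abelianness and strong rectangularity---and handle them separately. Abelianness of every $\m b\in\Hom\Sub(\m a)$ is free from the hypothesis: the diagonal embedding $\m a\hookrightarrow\m a^2$ gives $\Sub(\m a)\subseteq \Sub(\m a^2)$, hence $\Hom\Sub(\m a)\subseteq\Hom\Sub(\m a^2)$, which consists of abelian algebras by assumption. So the only remaining task is to prove that each $\m b\in\Hom\Sub(\m a)$ is strongly rectangular.

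Strong rectangularity is a local property: any failure in some $\m b$ is witnessed by one polynomial together with a $1,1$-matrix in which $q=r$ and $r\neq s$, so the data involves only finitely many elements. Using the routine inclusion $\Sub\Hom\Sub(\m a)\subseteq\Hom\Sub(\m a)$, I may therefore replace $\m b$ by a finitely generated counterexample, and write $\m b=\m c/\theta$ for some finitely generated subalgebra $\m c\leq\m a$. The hypothesis now supplies two facts: $\m c$ is strongly solvable, and from $\m c^2\leq\m a^2$ we conclude that every member of $\Hom\Sub(\m c^2)$ is abelian. I would then induct along a strongly solvable chain of congruences $0=\gamma_0<\gamma_1<\cdots$ on $\m c$, showing that each quotient $\m c/\gamma_\alpha$ is strongly abelian by working downward from the trivial top of the chain. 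The crux of each successor step is a composition statement: if $\m d$ is an algebra with $\m d^2$ abelian, $\mu\in\Con(\m d)$ is strongly abelian as a congruence, and the quotient $\m d/\mu$ is strongly abelian, then $\m d$ itself is strongly abelian. Any transfinite limit steps should pass through directed unions of congruences without additional work.

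The main obstacle is proving this composition statement without the finiteness hypothesis that is available in the original McKenzie--Valeriote Theorem~7.1. Given a $1,1$-matrix in $\m d$ with $q=r$, the strong abelianness of $\m d/\mu$ forces $(r,s)\in\mu$ and localizes the failure inside a single $\mu$-class. However, strong abelianness of the congruence $\mu$ alone only controls $1,1$-matrices whose four entries lie in one $\mu$-class, and in general $(p,q)\notin\mu$. The delicate step is to exploit the abelianness of $\m d^2$ to manufacture an auxiliary $1,1$-matrix whose entries all lie in one $\mu$-class yet which witnesses the same would-be failure $q=r\neq s$; strong abelianness of $\mu$ then applies to force $r=s$ and produce the sought contradiction.
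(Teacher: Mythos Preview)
Your reduction to a finitely generated $\m b=\m c/\theta$ and the split into abelianness plus strong rectangularity are fine, but the heart of your plan---the ``composition lemma''---is where the argument stalls, and in two ways.

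First, the hypothesis you give it is too weak. ``$\m d^2$ abelian'' is equivalent to ``$\m d$ abelian'', so your composition lemma asserts that in any abelian algebra, a strongly abelian congruence $\mu$ with strongly abelian quotient $\m d/\mu$ forces $\m d$ itself to be strongly abelian. If that were available, the McKenzie--Valeriote theorem (the paper's Claim~\ref{fake-mck-val-clm}) would not need the stronger hypothesis that all of $\Hom\Sub(\m a^2)$ is abelian; mere abelianness of $\m a$ would do. What you actually have at your disposal is that $\Hom\Sub(\m d^2)$ is abelian, and that extra leverage is what gets used. Second, your sketch misidentifies what strong abelianness of $\mu$ controls: the condition restricts $1,1$-matrices whose \emph{input tuples} $\wec{a},\wec{b}$ and $\wec{u},\wec{v}$ are $\mu$-related, not matrices whose four \emph{output entries} lie in a single $\mu$-class. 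So ``manufacture an auxiliary matrix whose entries lie in one $\mu$-class'' is not the right target; you would need to replace the parameters by $\mu$-related ones while keeping $q=r\neq s$, and nothing in your outline explains how abelianness of $\m d^2$ accomplishes that. (A smaller issue: your chain $0=\gamma_0<\gamma_1<\cdots$ does not obviously pass through $\theta$, so the downward induction yields $\m c$ strongly abelian, not $\m c/\theta$; this is fixable by refining the chain through $\theta$, but you should say so.)

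The paper proceeds quite differently. It invokes McKenzie--Valeriote's Theorem~7.1 as a black box to get $\m a$ (hence every $\m b\le\m a$) strongly abelian outright; this absorbs the strongly-solvable hypothesis in one step, with no induction along a chain. The new content of the lemma is then a direct argument that $\m b/\theta$ is strongly rectangular: from a bad $1,1$-matrix $\bigl[\begin{smallmatrix}p&q\\r&s\end{smallmatrix}\bigr]$ with $q\equiv_\theta r$ and $r\not\equiv_\theta s$, the paper builds a subalgebra $\m c\le\m b^2$ (generated by the diagonal and the pairs $(u_i,v_i)$), takes $\gamma$ to be the principal congruence on $\m c$ generated by $((p,q),(r,s))$, and exhibits an explicit failure of the term condition modulo $\gamma$. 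This produces a nonabelian member of $\Hom\Sub(\m b^2)\subseteq\Hom\Sub(\m a^2)$, contradicting the hypothesis. So the paper uses the full strength of ``$\Hom\Sub(\m a^2)$ abelian'' by actually constructing a quotient of a subalgebra of $\m b^2$, which is exactly the move your sketch is missing.
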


\begin{proof}
  For the first step of the proof we invoke 
  \cite[Theorem~7.1]{mckenzie-valeriote}, which proves the following:

\begin{clm} \label{fake-mck-val-clm}
Assume that every finitely generated
  subalgebra of $\m a$ is strongly solvable.
  If\/ $\Hom\Sub(\m a^2)$ consists of abelian algebras,
  then $\m a$ is strongly abelian.
\end{clm}

The conclusion that $\m a$ is strongly abelian in
Claim~\ref{fake-mck-val-clm}
implies that every algebra in
$\Sub(\m a)$ is also strongly abelian, since the
strong abelian property is expressible by
universal sentences. However it is not an
immediate consequence of Claim~\ref{fake-mck-val-clm}
that the class
$\Hom\Sub(\m a)$ consists of strongly abelian algebras.
For this we must show that if $\m b\in \Sub(\m a)$
and $\theta\in\Con(\m b)$, then $\m b/\theta$ is also
strongly abelian.

Choose and fix $\m b\in \Sub(\m a)$
and $\theta\in\Con(\m b)$.
Recall (from Section~\ref{prelim})
that an algebra is strongly abelian if and only if
it is both abelian and strongly rectangular.
We are assuming that all algebras in $\Hom\Sub(\m a^2)$
are abelian, and $\m b/\theta$ is in $\Hom\Sub(\m a)$,
which is a subclass of the abelian class $\Hom\Sub(\m a^2)$,
so to prove the lemma it suffices to prove that $\m b/\theta$
is strongly rectangular.

This will be a proof by contradiction.
Our aim will be to obtain a contradiction
from the assumptions that:
$\m a$ is strongly abelian (which we get from Claim~\ref{fake-mck-val-clm}), 
$\Hom\Sub(\m a^2)$ consists of abelian algebras,
$\m b\leq \m a$, $\theta\in\Con(\m b)$,
but $\m b/\theta$ is not strongly rectangular.
Observe that the assumptions that
$\m a$ is strongly abelian and $\m b\leq \m a$
imply that $\m b$ is strongly abelian.
We reiterate the observation of the last paragraph
that the assumption that
$\Hom\Sub(\m a^2)$ consists of abelian algebras
implies that $\m b/\theta$ is abelian.

The assumption that $\m b/\theta$ is not strongly rectangular
means exactly that there is a $1,1$-matrix in $\m b$
of the form
\begin{equation} \label{matrixEQ}
  \left[\begin{array}{cc}
t(\wec{a},\wec{u}) & t(\wec{a},\wec{v}) \\
t(\wec{b},\wec{u}) & t(\wec{b},\wec{v}) 
      \end{array}\right]=
  \left[\begin{array}{cc}
      p&q\\
      r&s
      \end{array}\right]
\end{equation}
with $q\equiv_{\theta} r$ but $r\not\equiv_{\theta} s$.

\begin{clm} \label{theta-related}
  No two elements of the second matrix of (\ref{matrixEQ})
  which come from the same row or column
are $\theta$-related.
\end{clm}

\cproof
We explain why $p$ and $r$, the two elements
of the first column of the second matrix in (\ref{matrixEQ}),
are not $\theta$-related and omit the proofs of the other (similar)
cases.

Assume that $p\equiv_{\theta} r$.
Since $\m b/\theta$ is abelian, 
the ordinary term condition
holds in $\m b/\theta$, i.e., $C(1,1;\theta)$ holds in $\m b$.
From
\[
t(\wec{a},\underline{\wec{u}}) = p\equiv_{\theta} r = t(\wec{b},\underline{\wec{u}}) 
\]
\bigskip

\noindent
we derive 
\[
t(\wec{a},\underline{\wec{v}}) = q\equiv_{\theta} s= t(\wec{b},\underline{\wec{v}}) 
\]
\bigskip

\noindent
by replacing the underlined $\wec{u}$'s with $\wec{v}$'s.
In short, if the two elements of the first column
of the second matrix in (\ref{matrixEQ})
are $\theta$-related, then the two elements in the parallel
column must also be $\theta$-related.
This, together with the earlier condition $q\equiv_{\theta} r$, which
asserts that the elements along the cross diagonal
are $\theta$-related, yields
that all elements of the matrix are $\theta$-related.
(That is, $p\equiv_{\theta} r\equiv_{\theta} q\equiv_{\theta} s$).
This is
in contradiction to the condition $r\not\equiv_{\theta} s$.
What we have contradicted was the assumption that the elements
$p$ and $r$, which come from the same column of the second
matrix in (\ref{matrixEQ}), are $\theta$-related.
\cqed

  \bigskip

Next, let
$D = \{(z,z)\;|\;z\in B\}$ be the diagonal subuniverse of $\m b^2$.
Let $\m c\leq \m b^2$ be the subalgebra of $\m b^2$
generated by $D$ and all pairs
$(u_i,v_i)$, $i=1,\ldots,n$, where $\wec{u}=(u_1,\ldots,u_n)$ and 
$\wec{v}=(v_1,\ldots,v_n)$ in the first matrix from (\ref{matrixEQ}).
Observe that the pairs $(p,q)$ and $(r,s)$ 
both belong to $\m c$, since
\[
(p,q)=
(t(\wec{a},\wec{u}), t(\wec{a},\wec{v}))
=t(\underbrace{(a_1,a_1),(a_2,a_2),\ldots,
(u_1,v_1),(u_2,v_2),\ldots}_{\text{generators of $\m c$}})\in C
\]
\bigskip

\noindent
and
\[
(r,s)=
(t(\wec{b},\wec{u}), t(\wec{b},\wec{v}))
=t(\underbrace{(b_1,b_1),(b_2,b_2),\ldots,
(u_1,v_1),(u_2,v_2),\ldots}_{\text{generators of $\m c$}})\in C.
\]

Let $\gamma$ be the principal congruence of $\m c$
generated by the single pair (of pairs)
$\big((p,q),(r,s)\big)$.

\begin{clm} \label{gamma_on_the_diagonal}
  The diagonal $D\subseteq C$ is a union of $\gamma$-classes.
  Moreover,
  if $\big((c,c), (d,d)\big)\in\gamma$, then
  $(c,d)\in\theta$.
\end{clm}

\cproof
Since $\gamma$ is the congruence generated by the pair
$\big((p,q), (r,s)\big)$,
it follows from
Maltsev's Congruence Generation Lemma that
to prove this claim it suffices to establish that
if $f$ is any unary polynomial of $\m c$,
and $f\big((p,q)\big) = (c,c)\in D$,
then $f\big((r,s)\big) = (d,d)\in D$
for some $d$ satisfying $(c,d)\in\theta$.

Assume that $f$ is a unary polynomial of $\m c$
and that $f\big((p,q)\big) = (c,c)$
for some $c$.
Unary polynomials of $\m c$ have the form
\[
f\big((x,y)\big)
= g\big((x,y),(\wec{u},\wec{v})\big)
= \big(g(x,\wec{u}), g(y,\wec{v})\big)
\]
\bigskip

\noindent
where $g(x,\wec{z})$ is a polynomial of $\m b$, since
$\m c$ is generated by $D$ and all pairs
$(u_i,v_i)$, $i=1,\ldots,n$, where $\wec{u}=(u_1,\ldots,u_n)$ and 
$\wec{v}=(v_1,\ldots,v_n)$. Thus $f\big((p,q)\big) = (c,c)$
can be rewritten
\begin{equation} \label{pANDq}
g(p,\wec{u}) = c = g(q,\wec{v})
\end{equation}
\bigskip

\noindent
for some polynomial $g$ of $\m b$. More fully, this is 
\[
g(t(\wec{a},\wec{u}),\wec{u}) = c = g(t(\wec{a},\wec{v}),\wec{v}).
\]
\bigskip

\noindent
Apply the term condition in the abelian algebra
$\m b$ to change the underlined
$\wec{a}$'s to $\wec{b}$'s below, so from 
\[
g(t(\underline{\wec{a}},\wec{u}),\wec{u}) = c = g(t(\underline{\wec{a}},\wec{v}),\wec{v})
\]
\bigskip

\noindent
we get
\[
g(t(\underline{\wec{b}},\wec{u}),\wec{u}) = g(t(\underline{\wec{b}},\wec{v}),\wec{v}).
\]

\bigskip

\noindent
Less fully, this equality may be rewritten
\begin{equation}\label{rANDs}
g(r,\wec{u}) = d = g(s,\wec{v})
\end{equation}
\bigskip

\noindent
for some $d$. In other words, the fact that $\m b$ is abelian
implies that if $f\big((p,q)\big) = (c,c)\in D$
for some $c$, then there is some $d$ such that
$f\big((r,s)\big) = \big(g(r,\wec{u}), g(s,\wec{v})\big)
= (d,d)$.

It remains to argue that we must have $(c,d)\in\theta$. Consider the
following $1,1$-matrix of $\m b$, where two of the entries
in the left matrix can be determined from Equation~(\ref{pANDq}):
\[
  \left[\begin{array}{cc}
g(p,\wec{u}) & g(p,\wec{v}) \\
g(q,\wec{u}) & g(q,\wec{v}) 
      \end{array}\right]=
  \left[\begin{array}{cc}
      c&*\\
      *&c
      \end{array}\right].
  \]
\bigskip
  
\noindent
The off-diagonal entries can be determined from the
diagonal entries, since $\m b$ is strongly abelian.
Namely, all four entries must equal $c$,
yielding $g(p,\wec{u})=g(p,\wec{v})=g(q,\wec{u})=g(q,\wec{v})=c$.
In particular, this and Equation~(\ref{rANDs}) yield
$(g(q,\wec{u}),g(r,\wec{u}))=(c,d)$. This shows that
the pair $(c,d)$ is a polynomial translate
of the pair $(q,r)$ via the translation
$x\mapsto g(x,\wec{u})$. Since $(q,r)\in\theta\in\Con(\m b)$,
according to the line after (\ref{matrixEQ}),
and $g(x,\wec{u})$ is a unary polynomial of $\m b$
it follows that $(c,d)\in\theta$.
\cqed

  \bigskip

\begin{clm}
$\m c/\gamma$ is not abelian.
\end{clm}

\cproof
We argue that $C(1,1;\gamma)$ fails in $\m c$.
For this it suffices to write down a bad $1,1$-matrix
of $\m c$:

\begin{equation} \label{matrixEQ2}
  \left[\begin{array}{cc}
 t\big((\wec{a},\wec{a}),\underline{(\wec{u},\wec{v})}\big)&
 t\big((\wec{b},\wec{b}),\underline{(\wec{u},\wec{v})}\big)\\
 \vphantom{A}&\\
 t\big((\wec{a},\wec{a}),\underline{(\wec{u},\wec{u})}\big)&
t\big((\wec{b},\wec{b}),\underline{(\wec{u},\wec{u})}\big)\\
      \end{array}\right]=
  \left[\begin{array}{cc}
      (p, q)&(r, s)\\
      \vphantom{A}&\\
     (p, p)&(r, r)\\
      \end{array}\right].
\end{equation}

(One should check that this \underline{is} a
$1,1$-matrix in $\m c$, i.e., that $t$ is being
applied to elements of $\m c$: $(a_i,a_i),
(b_j,b_j), (u_k,u_k),
(u_{\ell},v_{\ell})\in C$.)

Our goal is to show that
the first matrix in (\ref{matrixEQ2})
expresses a failure of $C(1,1;\gamma)$, which in more
standard notation might be written:

\[
t\big((\wec{a},\wec{a}),\underline{(\wec{u},\wec{v})}\big)
\equiv_{\gamma}
t\big((\wec{b},\wec{b}),\underline{(\wec{u},\wec{v})}\big),
\]

\bigskip

\noindent
while changing the underlined
$(\wec{u},\wec{v})$'s to 
$(\wec{u},\wec{u})$'s produces

\[
t\big((\wec{a},\wec{a}),\underline{(\wec{u},\wec{u})}\big)
\not\equiv_{\gamma}
t\big((\wec{b},\wec{b}),\underline{(\wec{u},\wec{u})}\big).
\]

To see that this truly is a failure of $C(1,1;\gamma)$,
notice that the elements on the first row of the second
matrix in (\ref{matrixEQ2}) are indeed $\gamma$-related,
since $\gamma$ was defined to be the congruence
of $\m c$ generated by
$\big((p,q), (r,s)\big)$.
But notice also that the elements on the second row of the second
matrix in (\ref{matrixEQ2}) cannot possibly be $\gamma$-related.
For, we proved in Claim~\ref{gamma_on_the_diagonal}
that $\gamma$-related pairs of the form
$\big((c,c),(d,d)\big)$
must satisfy $(c,d)\in\theta$, and
we proved that $(p,r)\notin \theta$ in Claim~\ref{theta-related}.
Hence $\big((p,p),(r,r)\big)\notin\gamma$.
\cqed

\bigskip

To summarize,
in the main portion of the proof we showed
that when $\m b$ is strongly abelian,
$\theta\in\Con(\m b)$, and $\m b/\theta$ is abelian but
not strongly rectangular, then there is an algebra
$\m c/\gamma\in\Hom\Sub(\m b^2)$
that is not abelian.
It follows that when $\m a$ is strongly abelian,
$\Hom\Sub(\m a^2)$ consists of abelian algebras,
$\m b\leq \m a$, $\theta\in\Con(\m b)$,
and $\m b/\theta$ is not strongly rectangular,
then there is an algebra
$\m c/\gamma\in\Hom\Sub(\m b^2)\subseteq \Hom\Sub(\m a^2)$
that is not abelian. This was what we needed to
establish, as one can verify by consulting
the third paragraph following Claim~\ref{fake-mck-val-clm}.
\end{proof}

\begin{thm} \label{subclassK}
  Let $\mathcal V$ be an abelian variety. If 
  ${\mathcal K}$
  is any subclass of ${\mathcal V}$ that consists of
  strongly abelian algebras, then the subvariety of
  $\mathcal V$ generated by $\mathcal K$ is
  strongly abelian.
  (Equivalently, the subclass of all strongly abelian
  algebras in $\mathcal V$ is a subvariety of $\mathcal V$.)
\end{thm}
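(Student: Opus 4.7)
The plan is to apply Birkhoff's theorem: the subvariety of $\mathcal V$ generated by $\mathcal K$ is $\Hom\Sub\Prod(\mathcal K)$, and since this is contained in the abelian variety $\mathcal V$, all its members are automatically abelian. Because strongly abelian $=$ abelian $+$ strongly rectangular, it therefore suffices to show that every algebra in $\Hom\Sub\Prod(\mathcal K)$ is strongly rectangular.

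First I would handle the inner $\Sub\Prod$ by the following formal observation: strong rectangulation is a universal first-order implication between equalities of $1,1$-matrix entries, so it is preserved by both subalgebras and direct products. Since each member of $\mathcal K$ is strongly abelian, we conclude that every algebra in $\Sub\Prod(\mathcal K)$ is strongly abelian.

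Second, I would absorb the outer $\Hom$ by invoking Lemma~\ref{fake-mck-val}. Fix $\m a\in\Sub\Prod(\mathcal K)$; the previous step shows $\m a$ is strongly abelian, hence every (finitely generated) subalgebra of $\m a$ is strongly abelian and therefore strongly solvable. The algebra $\m a^2$ lies in the abelian variety $\mathcal V$, so $\Hom\Sub(\m a^2)\subseteq\mathcal V$ consists of abelian algebras. The hypotheses of Lemma~\ref{fake-mck-val} are met, and the lemma concludes that $\Hom\Sub(\m a)$ — in particular $\Hom(\m a)$ — consists of strongly abelian algebras. Since any $\m b\in\Hom\Sub\Prod(\mathcal K)$ has the form $\m d/\theta$ with $\m d\in\Sub\Prod(\mathcal K)$, applying this argument with $\m a:=\m d$ shows $\m b$ is strongly abelian.

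The only non-formal step is the passage from $\m a$ strongly abelian to $\Hom(\m a)$ strongly abelian, since $\Hom$ does not in general preserve strong rectangulation. That content is exactly what Lemma~\ref{fake-mck-val} supplies, and its key side-hypothesis ``$\Hom\Sub(\m a^2)$ consists of abelian algebras'' is provided for free by the global assumption that $\mathcal V$ is abelian. Once this is in hand, the parenthetical equivalent formulation follows immediately: the class of strongly abelian algebras in $\mathcal V$ is closed under $\Hom$, $\Sub$, and $\Prod$, so by Tarski's theorem it forms a subvariety of $\mathcal V$.
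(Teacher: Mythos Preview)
Your proof is correct and follows essentially the same route as the paper: use that the strongly abelian property is expressible by universal Horn sentences to get $\Sub\Prod(\mathcal K)$ strongly abelian, then apply Lemma~\ref{fake-mck-val} to each $\m a\in\Sub\Prod(\mathcal K)$ (with the hypotheses supplied exactly as you indicate) to conclude that $\Hom\Sub\Prod(\mathcal K)$ is strongly abelian. Your presentation is slightly more explicit in noting that strongly abelian implies strongly solvable and in separating the abelian and strongly rectangular pieces, but the argument is the same.
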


\begin{proof}
Since ${\mathcal K}$ consists of strongly abelian algebras,
and the property of being
strongly abelian is expressible
by a family of universal Horn sentences,
the class $\Sub\Prod({\mathcal K})$ of algebras isomorphic
to subalgebras of products
of members of $\mathcal K$ consists of strongly abelian algebras.
Any $\m a\in\Sub\Prod(\mathcal K)$ will be strongly
abelian, hence will have the property
that its finitely generated
subalgebras are strongly abelian.
Moreover, since $\m a\in\mathcal V$, and $\mathcal V$ is abelian,
the class $\Hom\Sub(\m a^2)$ will consist of abelian algebras.
By Lemma~\ref{fake-mck-val}, the class 
$\Hom\Sub(\m a)$ consists of strongly
abelian algebras. Since $\m a\in\Sub\Prod(\mathcal K)$
was arbitrary, this means that
$\Hom\Sub(\Sub\Prod(\mathcal K)) = \Hom\Sub\Prod(\mathcal K)$
consists of strongly abelian algebras.
Since $\Hom\Sub\Prod(\mathcal K)$ is the variety generated
by $\mathcal K$, the theorem is proved.
\end{proof}

\begin{remark}
For \emph{locally finite} varieties,
the result stated in Theorem~\ref{subclassK}
is due to Matt Valeriote.
It is known from Tame Congruence Theory that
if $\mathcal V$ is a locally finite variety,
then the subclass $\mathcal S$
of locally strongly solvable algebras
in $\mathcal V$ is a subvariety of $\mathcal V$.
Valeriote proved in \cite{valeriote}
that the following are equivalent
for any locally finite \emph{abelian} variety $\mathcal S$:
\begin{enumerate}
\item[(1)]  $\mathcal S$ is locally strongly solvable.
\item[(2)]  $\mathcal S$ is strongly abelian.
\end{enumerate}

Here is how you deduce
Theorem~\ref{subclassK} in the locally finite case
from the statements just made.
Assume that $\mathcal V$ is a locally finite abelian variety.
Let $\mathcal S\subseteq \mathcal V$ be the subvariety of $\mathcal V$
consisting of locally strongly solvable algebras in $\mathcal V$.
Let $\mathcal K\subseteq \mathcal V$ be the subclass
of strongly abelian algebras of $\mathcal V$.
Since a strongly abelian algebra is locally strongly solvable,
$\mathcal K\subseteq \mathcal S$.
Valeriote's Theorem applied to the subvariety $\mathcal S$
shows that $\mathcal S\subseteq \mathcal K$,
hence $\mathcal K=\mathcal S$,
which shows that $\mathcal K$ is a subvariety of $\mathcal V$.
\end{remark}

\begin{thm} \label{III}
If $\mathcal V$ is an abelian variety that is not affine, then
  $\mathcal V$ contains a nontrivial strongly abelian subvariety.
\end{thm}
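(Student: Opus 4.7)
The plan is to combine the two preceding results directly: Theorem~\ref{II} supplies a nontrivial strongly abelian algebra in $\mathcal V$, and Theorem~\ref{subclassK} promotes any strongly abelian subclass of an abelian variety to a strongly abelian subvariety. No new construction is needed.

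First I would invoke Theorem~\ref{II} with the hypothesis that $\mathcal V$ is abelian but not affine. This produces a nontrivial algebra $\m a\in\mathcal V$ that is strongly abelian. Next, I would set $\mathcal K:=\{\m a\}$, which is a subclass of $\mathcal V$ consisting of strongly abelian algebras, and appeal to Theorem~\ref{subclassK}: the subvariety $\mathcal W$ of $\mathcal V$ generated by $\mathcal K$ is strongly abelian. Finally, $\mathcal W$ is nontrivial because it contains $\m a$, which has more than one element.

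There really is no obstacle at this stage, since all of the technical work has been absorbed into the lemmas and theorems already in place. The substance of the argument lives in Theorem~\ref{II} (which uses the machinery of Lemma~\ref{nonaffine}, the quasiorder $\sqsupseteq$, and the two rectangulation claims) and in Theorem~\ref{subclassK} (which rests on the extension Lemma~\ref{fake-mck-val} of the McKenzie--Valeriote result). The proof of Theorem~\ref{III} is then essentially a one-line composition of these two statements.
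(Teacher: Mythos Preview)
Your proposal is correct and matches the paper's proof essentially line for line: invoke Theorem~\ref{II} to obtain a nontrivial strongly abelian $\m a\in\mathcal V$, then apply Theorem~\ref{subclassK} to conclude that the subvariety generated by $\m a$ is strongly abelian and nontrivial. There is nothing to add.
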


\begin{proof}
  Theorem~\ref{II} shows that if $\mathcal V$
  is abelian but not affine, then there is some
  nontrivial algebra in $\m a\in \mathcal V$ that
  is strongly abelian. By Theorem~\ref{subclassK},
  the subvariety of $\mathcal V$ generated
  by $\m a$ is a nontrivial strongly abelian
  subvariety of $\mathcal V$.
\end{proof}

\begin{cor} \label{minvar}
  Any minimal abelian variety of algebras is affine
  or strongly abelian. $\Box$
\end{cor}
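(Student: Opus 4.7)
The plan is to obtain the corollary as an almost immediate consequence of Theorem~\ref{III}, using only the definition of minimal variety recorded in Section~\ref{prelim}. Let $\mathcal V$ be a minimal abelian variety. The dichotomy is tautological: either $\mathcal V$ is affine, and then there is nothing to prove, or $\mathcal V$ is not affine, in which case Theorem~\ref{III} applies and produces a nontrivial strongly abelian subvariety $\mathcal W \subseteq \mathcal V$.

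Next I would invoke minimality of $\mathcal V$. By definition, a minimal variety is nontrivial and its only subvarieties are the trivial subvariety and itself. Since $\mathcal W$ is nontrivial, this forces $\mathcal W = \mathcal V$, so $\mathcal V$ is itself strongly abelian. Combining the two cases gives the disjunction in the statement.

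Essentially all of the work has already been done in the preceding Facts (I)--(III); the corollary is just the specialization of Theorem~\ref{III} to minimal varieties, where ``has a nontrivial strongly abelian subvariety'' collapses to ``is strongly abelian.'' For this reason I do not anticipate any obstacle in writing the proof; the only things to be careful about are invoking the right theorem and citing the definition of minimality correctly. No additional properties of abelian, affine, or strongly abelian algebras are needed beyond what Theorem~\ref{III} already encapsulates.
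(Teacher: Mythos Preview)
Your proposal is correct and matches the paper's approach exactly: the corollary is marked with a $\Box$ and no explicit proof, indicating it is an immediate consequence of Theorem~\ref{III} together with the definition of minimality, which is precisely the argument you have spelled out.
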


\bibliographystyle{plain}

\begin{thebibliography}{10}

\bibitem{kearnes-id-simple}
Keith A. Kearnes, 
{\it  Idempotent simple algebras.}
in Logic and Algebra,
Lecture Notes in Pure and Appl. Math. 180, 1996, 529--572.

\bibitem{kearnes-min-id}
Keith A. Kearnes, 
{\it Almost  all minimal idempotent varieties are congruence modular.}
Algebra Universalis {\bf 44} (2000), 39--45. 

\bibitem{kearnes-kiss}
Keith A. Kearnes and Emil W.\ Kiss, 
{\sl The shape of congruence lattices.}
Mem.\ Amer.\ Math.\ Soc.\ {\bf 222} (2013), no. 1046.

\bibitem{kearnes-kiss-szendrei}
  Keith A. Kearnes, Emil W. Kiss, and \'Agnes Szendrei,
  {\it Varieties whose finitely generated members are free.}
  Algebra Universalis {\bf 79} (2018), no. 1, 79:3.

\bibitem{kksz2}
  Keith A. Kearnes, Emil W. Kiss, and \'Agnes Szendrei,  
  {\it Minimal abelian varieties of algebras, II},
  manuscript.

\bibitem{kksz3}
  Keith A. Kearnes, Emil W. Kiss, and \'Agnes Szendrei,  
  {\it Minimal abelian varieties of algebras, III},
  manuscript.

\bibitem{kearnes-kiss-valeriote}  
Keith A.\  Kearnes, Emil W.\ Kiss, and Matthew A.\ Valeriote, 
  {\it Minimal sets and varieties.}
  Trans.\ Amer.\ Math.\ Soc.\ {\bf 350} (1998), no. 1, 1--41.

\bibitem{kearnes-szendrei1}  
Keith A. Kearnes and \'Agnes Szendrei,
{\it A characterization of minimal locally finite varieties.}
Trans.\ Amer.\ Math.\ Soc.\ {\bf 349} (1997), no. 5, 1749--1768.

\bibitem{kearnes-szendrei2}
  Keith A. Kearnes and \'Agnes Szendrei,
{\it The relationship between two commutators.}
Internat.\ J.\ Algebra Comput. {\bf 8} (1998), 497--531.

\bibitem{mckenzie-valeriote}
Ralph McKenzie and Matthew A.\ Valeriote,   
{\sl The structure of decidable locally finite varieties.}
Progress in Mathematics, {\bf 79}. Birkh\"auser Boston, Inc.,
Boston, MA, 1989. 

\bibitem{szendrei-survey}
  \'Agnes Szendrei,
{\it A survey on strictly simple algebras and minimal varieties.}
Universal algebra and quasigroup theory (Jadwisin, 1989), 209--239,
Res.\ Exp.\ Math., {\bf 19}, Heldermann, Berlin, 1992. 
          
\bibitem{szendrei1}  
  \'Agnes Szendrei, 
  {\it Strongly abelian minimal varieties.}
  Acta Sci. Math. (Szeged) {\bf 59} (1994), no.
1-2, 25--42.

\bibitem{szendrei2}  
  \'Agnes Szendrei, 
  {\it Maximal non-affine reducts of simple affine algebras.}
  Algebra Universalis {\bf 34}
(1995), no. 1, 144--174.

\bibitem{valeriote}  
Matthew Anthony Valeriote,
On decidable locally finite varieties
(universal algebra, abelian, multi-unary),
PhD. dissertation,
University of California, Berkeley. 1986.
139 pp.
\end{thebibliography}

\end{document}